\documentclass[12pt, reqno]{amsart}
\usepackage{amsmath, amsthm, amscd, amsfonts, amssymb, xcolor}
\usepackage{bm}

\textheight 22.6truecm
\textwidth 16truecm
\setlength{\oddsidemargin}{0.2in}\setlength{\evensidemargin}{0.2in}

\setlength{\topmargin}{-.5cm}

\newtheorem{theorem}{Theorem}[section]
\newtheorem{lemma}[theorem]{Lemma}

\theoremstyle{definition}
\newtheorem{definition}[theorem]{Definition}

\numberwithin{equation}{section}

\newcommand{\vp}{\varphi}

\newcommand{\clb}{\mathcal{B}}

\newcommand{\cle}{\mathcal{E}}

\newcommand{\clh}{\mathcal{H}}
\newcommand{\clk}{\mathcal{K}}

\newcommand{\cls}{\mathcal{S}}

\newcommand{\D}{\mathbb{D}}

\newcommand{\T}{\mathbb{T}}
\newcommand{\Z}{\mathbb{Z}}

\begin{document}


\setcounter{page}{1}


\title[Paired and Toeplitz + Hankel operators]{Paired and Toeplitz + Hankel operators}

\author[Das]{Nilanjan Das}
\address{Indian Statistical Institute, Statistics and Mathematics Unit, 8th Mile, Mysore Road, Bangalore, 560059,
India}
\email{nilanjand7@gmail.com}

\author[Das]{Soma Das}
\address{Indian Statistical Institute, Statistics and Mathematics Unit, 8th Mile, Mysore Road, Bangalore, 560059, India}
\email{dsoma994@gmail.com}

\author[Sarkar]{Jaydeb Sarkar}
\address{Indian Statistical Institute, Statistics and Mathematics Unit, 8th Mile, Mysore Road, Bangalore, 560059,
India}
\email{jay@isibang.ac.in, jaydeb@gmail.com}

\subjclass[2020]{47B35, 47B38, 46J15, 30H10, 30J05, 15B05, 32A25}

\keywords{Toeplitz operators, Hankel operators, model spaces, inner functions, Hardy spaces}

\begin{abstract}
We present complete classifications of Toeplitz + Hankel operators on vector-valued Hardy spaces and classify paired operators on $L^2(\mathbb{T})$. We also study the latter class through the lens of inner functions on the disc.
\end{abstract}

\maketitle

\tableofcontents

\section{Introduction}\label{sec intro}

This paper focuses on two classical classes of operators: Toeplitz operators and Hankel operators.  Although Toeplitz and Hankel operators are distinguishable topics, this paper will examine them together by focusing on bounded linear operators $T$ that admit the form
\[
T = \text{Toeplitz operator} + \text{Hankel operator}.
\]
In short, we will write these operators as Toeplitz + Hankel. Our objective here is to provide thorough classifications of Toeplitz + Hankel on vector-valued Hardy spaces.

Given a Hilbert space $\cle$ (in this paper, all Hilbert spaces are separable and over the field $\mathbb{C}$ of complex numbers), $L^2_{\cle}(\T)$ is defined as the Lebesgue-Bochner space of all $\cle$-valued measurable functions $f: \T \to \cle$ such that
\[
\|f\|:= \Big(\frac{1}{2 \pi} \int_{0}^{2 \pi} \|f(e^{i\theta})\|^2 d\theta \Big)^\frac{1}{2} < \infty,
\]
where $\T:=\{z\in\mathbb{C}:|z|=1\}$.
The $\cle$-valued Hardy space $H^2_{\cle}(\T)$ is the closed subspace of $L^2_{\cle}(\T)$ that is made up of functions of $L^2_{\cle}(\T)$ whose negative Fourier coefficients are equal to zero \cite[Chapter IX]{FF}. Given the radial limits, $H^2_{\cle}(\T)$ is equivalent to the Hilbert space of analytic functions $f : \D \to \cle$, such that
\[
\|f\| := \sup_{0 < r < 1} \Big(\frac{1}{2 \pi} \int_{0}^{2 \pi} \|f(r e^{i\theta})\|^2 d\theta \Big)^\frac{1}{2} < \infty,
\]
where $\mathbb{D}:=\{z\in\mathbb{C}:|z|<1\}$.
Moreover, we define the Banach space $L^\infty_{\clb(\cle)}(\T)$ as the space of all measurable functions $\Phi: \T \to \clb(\cle)$ such that
\[
\|\Phi\|:= \underset{\theta \in [0, 2 \pi]}{\text{ess} \sup} \, \|\Phi(e^{i \theta})\| < \infty,
\]
where $\clb(\cle)$ is the space of all bounded linear operators on $\cle$. For each $\Phi \in L^\infty_{\clb(\cle)}(\T)$, the multiplication operator $M_\Phi: L^2_{\cle}(\T) \to L^2_{\cle}(\T)$ (the vector-valued Laurent operator) defined by
\[
M_\Phi f = \Phi f \qquad (f \in L^2_{\cle}(\T)),
\]
is a bounded linear operator, and $\|M_\Phi\| = \|\Phi\|$. The \textit{Toeplitz operator} $T_\Phi \in \clb(H^2_{\cle}(\T))$ with symbol $\Phi \in L^\infty_{\clb(\cle)}(\T)$ is defined by
\[
T_\Phi = P_{H^2_{\cle}(\T)} M_\Phi|_{H^2_{\cle}(\T)}.
\]
Let us clarify that by $P_S$, we denote the orthogonal projection of a Hilbert space $\clh$ onto a closed subspace $S$ of $\clh$.
We remark that the most basic Toeplitz operator that holds significant importance is $T_z$, which is associated with the symbol $\Phi = z I_\cle \in L^\infty_{\clb(\cle)}(\T)$. Therefore, we have $T_z = P_{H^2_{\cle}(\T)} M_z|_{H^2_{\cle}(\T)}$. Similarly, the \textit{Hankel operator} $H_\Phi \in \clb(H^2_{\cle}(\T))$ with symbol $\Phi \in L^\infty_{\clb(\cle)}(\T)$ is defined by
\[
H_\Phi = P_{H^2_{\cle}(\T)} M_\Phi J|_{H^2_{\cle}(\T)},
\]
where
\[
(Jf)(z) = f(\bar z) \qquad (f \in L^2_{\cle}(\T), z \in \T),
\]
is the standard conjugation operator on $L^2_{\cle}(\T)$. In Theorem \ref{thm TH vv}, we present classifications of Toeplitz + Hankel operators on $H^2_\cle(\T)$ as follows:

\begin{theorem}
Let $A \in \clb(H^2_{\cle}(\T))$. Then the following are equivalent:
\begin{enumerate}
\item $A = $ Toeplitz + Hankel.
\item $A T_z -T_z^*A$ is a Toeplitz operator.
\item $T_z^*A T_z -A$ is a Hankel operator.
\item $A T_z +{T_z^*}^2A T_z = T_z^*A +T_z^*A T_z^2$.
\end{enumerate}
\end{theorem}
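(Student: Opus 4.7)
My plan is to establish the chain $(1) \Rightarrow (2) \Leftrightarrow (3) \Leftrightarrow (4) \Rightarrow (1)$, isolating the algebraic equivalences among $(2)$, $(3)$, $(4)$ from the harder structural step $(2) \Rightarrow (1)$. Two classical tools operate in the background: the vector-valued Brown--Halmos theorem (an $A \in \clb(H^2_{\cle}(\T))$ is a Toeplitz operator iff $T_z^* A T_z = A$) and Page's characterization ($A$ is a Hankel operator iff $A T_z = T_z^* A$).

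For $(1) \Rightarrow (2)$: given $A = T_\Phi + H_\Xi$, the symbol calculus $T_\Phi T_z = T_{\Phi z}$, $T_z^* T_\Phi = T_{\bar z \Phi}$, together with Page's identity $H_\Xi T_z = T_z^* H_\Xi$, causes the Hankel contributions to cancel, leaving $A T_z - T_z^* A = T_{(z - \bar z)\Phi}$, which is Toeplitz. For the equivalence of $(2)$, $(3)$, $(4)$, let $C := A T_z - T_z^* A$ and $B := T_z^* A T_z - A$. A direct rearrangement shows $(4)$ is identically $T_z^* C T_z = C$, so Brown--Halmos gives $(4) \Leftrightarrow (2)$. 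A further short expansion yields $B T_z - T_z^* B = T_z^* C T_z - C$, which vanishes iff $C$ is Toeplitz iff $B$ is Hankel (by Page), giving $(3) \Leftrightarrow (4)$.

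The main step is $(2) \Rightarrow (1)$. Assume $C = T_\Psi$, set $A_n := T_z^{*n} A T_z^n$ (bounded uniformly by $\|A\|$, with $A_{n+1} = T_z^* A_n T_z$), and form the Ces\`aro averages $\bar A_N := \frac{1}{N}\sum_{n=0}^{N-1} A_n$. Then $T_z^* \bar A_N T_z - \bar A_N = \frac{1}{N}(A_N - A_0) \to 0$ in norm. Since $H^2_\cle(\T)$ is separable, the WOT is metrizable on norm-bounded sets and Banach--Alaoglu supplies a WOT-convergent subsequence $\bar A_{N_k} \to T$; passing to the limit gives $T_z^* T T_z = T$, so Brown--Halmos yields $\Phi \in L^\infty_{\clb(\cle)}(\T)$ with $T = T_\Phi$. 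A short iteration from $A T_z = T_z^* A + T_\Psi$ combined with Brown--Halmos ($T_z^{*n} T_\Psi T_z^n = T_\Psi$) gives $A_n T_z - T_z^* A_n = T_\Psi$ for every $n$; averaging and passing to the WOT-limit yields $T_\Phi T_z - T_z^* T_\Phi = T_\Psi = A T_z - T_z^* A$, i.e., $(A - T_\Phi) T_z = T_z^* (A - T_\Phi)$. Page's theorem now produces $\Xi$ with $A - T_\Phi = H_\Xi$, completing $A = T_\Phi + H_\Xi$.

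The principal obstacle is this final limiting argument: one must justify WOT-compactness via separability, exploit Ces\`aro averaging to tame the possibly non-convergent sequence $\{A_n\}$, and invoke the full vector-valued versions of Brown--Halmos and Page (rather than their scalar prototypes) to convert the limiting intertwining identities into bounded symbol representations. The remaining implication and the algebraic equivalences $(2) \Leftrightarrow (3) \Leftrightarrow (4)$ are by contrast short formal computations.
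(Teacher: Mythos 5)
Your proposal is correct, and the algebraic part (the implication $(1)\Rightarrow(2)$ and the equivalences $(2)\Leftrightarrow(3)\Leftrightarrow(4)$ via the identity $BT_z - T_z^*B = T_z^*CT_z - C$ for $B = T_z^*AT_z - A$ and $C = AT_z - T_z^*A$) matches the paper's computations essentially verbatim. The hard implication, however, is handled by a genuinely different method. The paper starts from $T_z^*A - AT_z = T_{\Phi_1}$ and proves by induction that $T_z^{*n}A - AT_z^n = T_{\frac{z^n-\bar z^n}{z-\bar z}\Phi_1}$; the uniform bound $\|\frac{z^n-\bar z^n}{z-\bar z}\Phi_1\|_\infty \le 2\|A\|$ together with a Kronecker--Jacobi density lemma (the set $\{z^{2n}\}$ is dense in $\T$ for $z=e^{2\pi i\alpha}$, $\alpha$ irrational) then shows that the explicit function $\Phi = \frac{1}{\bar z - z}\Phi_1$ is essentially bounded, and $A - T_\Phi$ is Hankel. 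You instead form the Ces\`aro averages of $A_n = T_z^{*n}AT_z^n$, extract a WOT-cluster point $T$ via Banach--Alaoglu on the (separable) space, verify $T_z^*TT_z = T$ by the telescoping estimate, and pass the invariant identity $\bar A_NT_z - T_z^*\bar A_N = T_\Psi$ to the limit; all the steps you use (WOT-continuity of one-sided multiplication, the vector-valued Brown--Halmos and Hankel intertwining characterizations) are sound, so the argument goes through. The trade-off is clear: the paper's route is constructive and exhibits the Toeplitz symbol explicitly as $\Phi_1/(\bar z - z)$ --- information it exploits later when discussing uniqueness of symbols --- at the cost of the somewhat delicate division by a function vanishing at $\pm 1$; your route is softer and shorter, avoids the density lemma entirely, and generalizes painlessly, but produces the symbol only as an abstract weak limit.
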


When $\cle=\mathbb{C}$, Theorem \ref{thm TH vv} recovers the classification of Toeplitz + Hankel operators obtained in \cite{Ehr} (see the end of Section \ref{sec TH scalar} for more details).
We also contribute to the structure of scalar-valued Toeplitz + Hankel operators in terms of Beurling inner functions. Before proceeding further, let us mention that throughout this article, we will use the notations $L^2(\T), H^2(\T)$ and $L^\infty(\T)$ for the spaces $L^2_{\mathbb{C}}(\T), H^2_{\mathbb{C}}(\T)$ and $L^\infty_{\clb(\mathbb{C})}(\T)$, respectively. It is known that if the kernel of a Hankel operator is nonzero, then it is invariant under the shift $T_z$ and hence admits a Beurling representation. The converse of this statement is also true (see Section \ref{sec TH scalar} for more details). Therefore, non-injective Hankel operators are both substantial and significant when viewed from a function-theoretic standpoint. Consequently, it is anticipated that this class of Hankel operators will possess a more intricate structure. In view of this, in Theorem \ref{main-th-T+H}, we further offer a completely different perspective of the above theorem in the scalar case and non-injective Hankel operators, which exposes further function-theoretic aspects of the symbols:

\begin{theorem}
Let $A \in \clb(H^2(\T))$. Then $A$ is a sum of a Toeplitz and a non-injective Hankel operator if and only if $T_z^*A T_z -A$ is a non-injective Hankel operator. Additionally, if
\[
{T_z^*}A T_z -A = H_{\psi_1},
\]
for some $\psi_1 \in L^\infty(\T)$, then there exist an inner function $\theta \in H^\infty(\D)$ and a symbol $\vp_2 \in L^\infty(\T)$ such that
\[
\ker H_{\psi_1} = \theta H^2(\T),
\]
and
\[
A T_\theta = T_{\vp_2},
\]
and $A = T_\vp + H_\psi$ for some symbol $\psi \in L^\infty(\T)$, where $\vp = \overline{\theta} \vp_2$.
\end{theorem}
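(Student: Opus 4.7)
The plan is to leverage Theorem \ref{thm TH vv} for the Toeplitz + Hankel decomposition and then track how non-injectivity is transmitted by the map $A \mapsto T_z^* A T_z - A$. The driving computation is the identity
\[
T_z^* H_\psi T_z - H_\psi = \bigl(T_z^{*2} - I\bigr) H_\psi = H_{(\bar z^2 - 1)\psi} \qquad (\psi \in L^\infty(\T)),
\]
which follows from the Hankel commutation $H_\psi T_z = T_z^* H_\psi$ together with $H_{\bar z^2 \psi} = T_z^{*2} H_\psi$; both are short Fourier-side calculations. Combining this with the Brown--Halmos identity $T_z^* T_\vp T_z = T_\vp$, one obtains, for any decomposition $A = T_\vp + H_\psi$,
\[
T_z^* A T_z - A = \bigl(T_z^{*2} - I\bigr) H_\psi.
\]

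For the forward implication, I would plug a given decomposition $A = T_\vp + H_\psi$ with $H_\psi$ non-injective into the identity above. Since $T_z^{*2} - I$ is injective on $H^2(\T)$ (the powers $T_z^{*2n}$ tend to zero strongly, so any fixed point of $T_z^{*2}$ must vanish), one concludes $\ker\bigl[(T_z^{*2} - I) H_\psi\bigr] = \ker H_\psi \neq \{0\}$, so that $T_z^* A T_z - A$ is a non-injective Hankel operator.

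For the reverse implication, Theorem \ref{thm TH vv} produces a decomposition $A = T_{\vp_0} + H_{\psi_0}$, and the identity yields $H_{\psi_1} = (T_z^{*2} - I) H_{\psi_0}$. The same injectivity argument forces $\ker H_{\psi_0} = \ker H_{\psi_1}$. Since $\ker H_{\psi_1}$ is closed and $T_z$-invariant (because $H_{\psi_1} T_z = T_z^* H_{\psi_1}$) and nonzero by hypothesis, Beurling's theorem supplies an inner function $\theta$ with $\ker H_{\psi_1} = \theta H^2(\T)$, and hence also $\ker H_{\psi_0} = \theta H^2(\T)$. In particular $H_{\psi_0} T_\theta = 0$, so using $\theta \in H^\infty$,
\[
A T_\theta = T_{\vp_0} T_\theta + H_{\psi_0} T_\theta = T_{\vp_0 \theta}.
\]
Setting $\vp_2 := \vp_0 \theta \in L^\infty(\T)$ gives $A T_\theta = T_{\vp_2}$, and the choice $\vp := \bar\theta \vp_2$ agrees almost everywhere with $\vp_0$ since $|\theta| = 1$ on $\T$. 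Taking $\psi := \psi_0$ then completes the required decomposition $A = T_\vp + H_\psi$.

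The only non-routine ingredient is the injectivity of $T_z^{*2} - I$ on $H^2(\T)$; this is what upgrades the kernel information from the ``derived'' Hankel operator $H_{\psi_1}$ to the Hankel part $H_{\psi_0}$ of the decomposition and is the crux of the argument. Everything else reduces to Brown--Halmos, the Hankel intertwining relation, Beurling's theorem, and the product identity $T_\vp T_\theta = T_{\vp \theta}$ valid for $\theta \in H^\infty$.
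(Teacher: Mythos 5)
Your proof is correct, but it takes a genuinely different route from the paper's in the converse direction. You first invoke Theorem \ref{thm TH vv} (condition (3)) to produce a decomposition $A = T_{\vp_0} + H_{\psi_0}$, and then transfer the kernel information via the identity $H_{\psi_1} = (T_z^{*2}-I)H_{\psi_0}$ together with the injectivity of $T_z^{*2}-I$ on $H^2(\T)$; this cleanly gives $\ker H_{\psi_0} = \ker H_{\psi_1}$ and everything else follows. The paper deliberately does \emph{not} use Theorem \ref{thm TH vv} here: its stated aim in Section \ref{sec TH scalar} is an independent, function-theoretic proof, so it builds the decomposition from scratch --- Beurling's theorem gives $\theta$ with $H_{\psi_1}T_\theta = 0$, whence $A T_\theta$ satisfies the Brown--Halmos identity and equals $T_{\vp_2}$; separately $A T_z - T_z^* A$ is shown to be a Toeplitz operator $T_{\vp_1}$, and right-multiplying by $T_\theta$ identifies $T_{\vp_1} = T_{\bar\theta\vp_2}T_z - T_z^* T_{\bar\theta\vp_2}$, so that $A - T_{\bar\theta\vp_2}$ is Hankel. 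Your approach buys brevity and a sharper statement (exact equality of kernels, handled symmetrically in both directions by the injectivity of $T_z^{*2}-I$ acting on the left, where the paper instead uses the right factorization $H_{(\bar z^2-1)\psi} = H_\psi T_{z^2-1}$ and injectivity of $T_{z^2-1}$), but it inherits the heavier machinery behind Theorem \ref{thm TH vv} (the Kronecker--Weyl density lemma and the induction on symbols), whereas the paper's argument is self-contained and classical. Both are valid; just be aware that your proof does not serve the paper's secondary purpose of giving an alternative derivation that bypasses the general classification theorem.
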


Here and hereafter, we denote by $H^\infty(\mathbb{D})$ the Banach algebra of scalar-valued bounded analytic functions defined in $\mathbb{D}$. Moreover, a function $\theta \in H^\infty(\D)$ is said to be \textit{inner} if the radial limit function of $\theta$ satisfies
\[
|\theta(z)| = 1,
\]
for almost all $z \in \T$.

We now proceed to the other contribution of this paper, which concerns paired operators. The idea of paired operators can be traced back at least to Widom's 1960s work in the context of singular integral equations \cite{HW} (also see Shinbrot \cite{MS}, and \cite{CG, Mikh, Pro}). However, the notion of paired operators can be presented in a much wider framework. Let $\clh$ be a Hilbert space and $P$ and $Q$ be orthogonal projections. Suppose
\[
P+Q = I_{\clh},
\]
that is, $P$ and $Q$ are complementary projections on $\clh$. Let $A, B \in \clb(\clh)$. The \textit{paired operator} corresponding to $A$ and $B$ and the complementary projections $P$ and $Q$ is defined by
\[
S_{A,B} = A P + B Q.
\]
The order of the projections in the above identity is significant, and in Section \ref{sec paired on L} (see also (\ref{eqn duals})), we shall explore the reverse order of projections in the context of transposed paired operators.

Throughout, $P_+$ will denote the orthogonal projection (also known as Szeg\"{o} projection) of $L^2(\T)$ onto the closed subspace $H^2(\T)$. Also, we write
\[
P_- = I_{L^2(\T)} - P_+.
\]
Therefore, $P_+$ and $P_-$ are complementary projections on $L^2(\T)$. The notion of paired operators that interests us can now be explained (cf. \cite{Cam}). 

\begin{definition}\label{def paired}
Given $\vp, \psi \in L^\infty(\T)$, the paired operator $S_{\vp, \psi}: L^2(\T)\to L^2(\T)$ is defined by
\[
S_{\vp, \psi} = M_\vp P_{+} + M_\psi P_{-}.
\]
\end{definition}

Theorem \ref{SNJ_Th1} provides an algebraic classification of paired operators, serving as another key result of this paper: Let $X\in\clb(L^2(\T))$. Then $X$ is a paired operator if and only if
\[
X = M_z^* X M_z P_{+} + M_z X M_z^*P_{-}.
\]
Moreover, if there exist $\vp, \psi \in L^\infty(\T)$ such that
\[
X=S_{\vp, \psi},
\]
then the representing functions $\vp$ and $\psi$ of $X$ are unique. In fact, the uniqueness of $\vp$ and $\psi$ follows easily from \cite[Corollary 2.4]{Cam}.

The idea of paired operators applies to a broader scope. In this paper, we explore the concept of $\theta$-paired operators ($\theta$ is an inner function) from a model space perspective. In particular, Theorem \ref{SNJ_Th2} of this paper presents a complete classification of $\theta$-paired operators. The idea of such paired operators is related to Sarason's notion of truncated Toeplitz operators (see Section \ref{sec remarks}).

In closing this section, we remark that, from a perturbation theory standpoint, the classification or general study of the Toeplitz + Hankel operators is also noteworthy. This is because one can view them as Hankel perturbations of Toeplitz operators, and vice versa. The same applies to paired operators. It is also worth noting that there is a considerable number of finite-rank Hankel operators.

The remainder of the paper is composed as follows: The paper's real contribution starts with the Toeplitz + Hankel operators. Section \ref{sec TandH} classifies this type of operator in the setting of vector-valued Hardy spaces. Section \ref{sec TH scalar} focuses on the scalar case and addresses the classification problem for sums of Toeplitz and non-injective Hankel operators. Section \ref{sec paired on L} deals with the classification of paired operators on $L^2(\T)$, while Section \ref{sec paired on H} introduces and then classifies $\theta$-paired operators on $H^2(\T)$. The final section, Section \ref{sec remarks}, consists of some general discussion concerning the content of this paper.

\section{Toeplitz + Hankel operators: the general case}\label{sec TandH}

This section focuses on the characterizations of Toeplitz + Hankel operators defined on vector-valued Hardy spaces. In Section \ref{sec TH scalar}, we will revisit this result in the context of the sum of scalar-valued Toeplitz and non-injective Hankel operators with a new proof and will provide further information about the symbols of such a class of operators.

Let $T, H \in \clb(H^2_{\cle}(\T))$. We have the following classifications of Toeplitz and Hankel operators \cite{FF, RR}: $T$ is a Toeplitz operator if and only if
\[
T_z^* T T_z = T,
\]
and $H$ is a Hankel operator if and only if
\[
T_z^* H = H T_z.
\]

We need a vector-valued variant of Lemma 12 from \cite{Ehr}. We merely rewrite the proof of \cite[Lemma 12]{Ehr} in the present setting.

\begin{lemma}\label{lma-Ehr}
Let $\cle$ be a Hilbert space, $M > 0$, and let $\Phi : \T \to \clb(\cle)$ be a measurable function. Suppose
\[
\|\Phi(z)(1-z^{2n})\|_{\infty} \leq M,
\]
for all $n \geq 1$ and $z \in \T$ a.e. Then $\Phi \in L^\infty_{\clb(\cle)}(\T)$.
\end{lemma}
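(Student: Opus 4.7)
The hypothesis can be rewritten as
\[
|1-z^{2n}|\, \|\Phi(z)\|_{\clb(\cle)} \leq M,
\]
since $1-z^{2n}$ is a scalar and factors out of the operator norm on $\cle$. The plan is to take the supremum over $n \geq 1$ at each point, which reduces the question to understanding $\sup_{n\geq 1}|1-z^{2n}|$ for almost every $z \in \T$. Because the given inequality fails on a null set $E_n$ for each fixed $n$, one first passes to the countable union $E := \bigcup_{n \geq 1} E_n$, which is still null, and restricts attention to its complement, where the estimate holds simultaneously for every $n$.

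The key step is to show that for almost every $z \in \T$,
\[
\sup_{n\geq 1}|1-z^{2n}| = 2.
\]
Writing $z = e^{i\theta}$, the sequence $z^{2n} = e^{2in\theta}$ is dense in $\T$ precisely when $\theta/\pi$ is irrational, and the set of $z$ on which this fails---namely, the roots of unity---is countable and hence of Lebesgue measure zero. On the dense orbit, $z^{2n}$ accumulates at $-1$, so the supremum equals $2$.

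Combining the two ingredients, for almost every $z \in \T$,
\[
2\, \|\Phi(z)\|_{\clb(\cle)} = \Big(\sup_{n\geq 1}|1-z^{2n}|\Big) \|\Phi(z)\|_{\clb(\cle)} \leq M,
\]
which gives $\|\Phi\|_{\infty} \leq M/2$ and hence $\Phi \in L^\infty_{\clb(\cle)}(\T)$, as desired.

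There is no serious obstacle here; the argument is essentially the one in \cite[Lemma 12]{Ehr}, and the vector-valued case proceeds identically because multiplying an operator by a scalar behaves exactly as in the scalar setting. The only mild point requiring care is the measure-theoretic bookkeeping: the exceptional set from the hypothesis is a countable union over $n$, which must be further enlarged by the null set of roots of unity, but the union remains null and the argument goes through.
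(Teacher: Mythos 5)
Your proof is correct and follows essentially the same route as the paper's: both reduce to showing $\sup_{n\geq 1}|1-z^{2n}|=2$ off a null set via density of the orbit $\{z^{2n}\}$ (the paper quotes Kronecker--Jacobi for irrational rotations, you phrase it via $\theta/\pi$ irrational), then divide by $2$. Your explicit handling of the countable union of exceptional null sets and the restriction to $n\geq 1$ is slightly more careful than the paper's, but the argument is the same.
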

\begin{proof}
Consider the set $X = \{e^{2 \pi i \alpha}: \alpha \in \mathbb{Q}^c\}$. Observe that the measure of $\T \setminus X$ is zero. For each $z \in X$, a well-known result of Kronecker and Jacobi \cite[page 38]{Nad} ensures that $\{z^{2n}: n \in\Z\}$ is dense in $\T$. For $z \in X$,  a.e. it follows that
\[
\begin{split}
\|\Phi(z)\| & = \Big\|\frac{1}{1 - z^{2n}} \Phi(z)(1-z^{2n})\Big\|
\\
& \leq \inf_{n \in \Z} \frac{M}{|1-z^{2n}|}
\\
& =  \frac{M}{\sup_{n \in \Z}|1-z^{2n}|}
\\
& =\frac{M}{2},
\end{split}
\]
as $\sup_{n \in \Z}|1-z^{2n}| = 2$. Therefore, $\Phi \in L^\infty_{\clb(\cle)}(\T)$.
\end{proof}

Now we are in a position to state and prove the Brown-Halmos type algebraic characterization of Toeplitz + Hankel operators defined on vector-valued Hardy spaces.

\begin{theorem}\label{thm TH vv}
Let $\cle$ be a Hilbert space, and let $A \in \clb(H^2_{\cle}(\T))$. The following conditions are equivalent:
\begin{enumerate}
\item $A$ is a Toeplitz + Hankel operator.
\item $A T_z - T_z^*A$ is a Toeplitz operator.
\item $T_z^*A T_z -A$ is a Hankel operator.
\item $A T_z + T_z^{*2} A T_z = T_z^* A + T_z^* A T_z^2$.
\end{enumerate}
\end{theorem}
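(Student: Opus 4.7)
The plan is to prove $(1) \Rightarrow (2)$ directly, handle the equivalences $(2) \Leftrightarrow (4) \Leftrightarrow (3)$ by expanding the shift characterizations of Toeplitz and Hankel operators, and then tackle the substantive implication $(2) \Rightarrow (1)$ by an iteration argument that invokes Lemma \ref{lma-Ehr}. For the easy direction, if $A = T_\Phi + H_\Psi$, then the factorizations $T_\Phi T_z = T_{z\Phi}$ and $T_z^* T_\Phi = T_{\bar z \Phi}$ (valid because $z$ is analytic), together with the Hankel intertwining $T_z^* H_\Psi = H_\Psi T_z$, give $A T_z - T_z^* A = T_{(z - \bar z)\Phi}$, a Toeplitz operator. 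The equivalence $(2) \Leftrightarrow (4)$ follows by applying the Brown--Halmos criterion $T_z^* T T_z = T$ to $T := A T_z - T_z^* A$ and rearranging a single line; similarly $(3) \Leftrightarrow (4)$ comes from applying the Hankel criterion $T_z^* H = H T_z$ to $H := T_z^* A T_z - A$.

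For $(2) \Rightarrow (1)$, write $A T_z - T_z^* A = T_{\Phi_0}$ with $\Phi_0 \in L^\infty_{\clb(\cle)}(\T)$. I would first establish by induction on $n$, using the identity
\[
A T_z^{n+1} - T_z^{*(n+1)} A = (A T_z - T_z^* A) T_z^n + T_z^* (A T_z^n - T_z^{*n} A),
\]
the formula
\[
A T_z^n - T_z^{*n} A = T_{\phi_n \Phi_0}, \qquad \phi_n(z) := z^{n-1} + z^{n-3} + \cdots + z^{-(n-1)},
\]
which satisfies the crucial identity $(z - \bar z)\phi_n(z) = z^n - \bar z^n$ on $\T$. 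Since the left-hand side has operator norm at most $2\|A\|$ and the vector-valued Brown--Halmos theorem gives $\|T_\alpha\| = \|\alpha\|_\infty$ for $\alpha \in L^\infty_{\clb(\cle)}(\T)$, I conclude $\|\phi_n \Phi_0\|_\infty \leq 2 \|A\|$ uniformly in $n$.

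Now define the measurable function $\Phi(z) := \Phi_0(z)/(z - \bar z)$ on $\T \setminus \{\pm 1\}$, extended arbitrarily on the measure-zero exceptional set. Then $\phi_n \Phi_0 = (z - \bar z) \phi_n \cdot \Phi = (z^n - \bar z^n) \Phi$ almost everywhere, and since $|z^n - \bar z^n| = |1 - z^{2n}|$ pointwise on $\T$, one has $\|\Phi(z)(1 - z^{2n})\|_\infty \leq 2\|A\|$ for every $n \geq 1$. Lemma \ref{lma-Ehr} therefore forces $\Phi \in L^\infty_{\clb(\cle)}(\T)$, so $T_\Phi$ is a bounded Toeplitz operator with $T_\Phi T_z - T_z^* T_\Phi = T_{(z - \bar z)\Phi} = T_{\Phi_0} = A T_z - T_z^* A$. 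Subtracting, $A - T_\Phi$ satisfies $T_z^*(A - T_\Phi) = (A - T_\Phi)T_z$, which by the Nehari-type characterization of Hankel operators cited at the start of this section forces $A - T_\Phi = H_\Psi$ for some $\Psi \in L^\infty_{\clb(\cle)}(\T)$. Hence $A = T_\Phi + H_\Psi$. The main obstacle is arranging the algebraic identity $(z - \bar z)\phi_n = z^n - \bar z^n$ so that Lemma \ref{lma-Ehr} applies directly to the candidate $\Phi$; the specific Dirichlet-kernel form of $\phi_n$ is what couples the operator-theoretic hypothesis $(2)$ to the measurable boundedness criterion supplied by the lemma.
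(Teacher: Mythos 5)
Your proposal is correct and follows essentially the same route as the paper's proof: both reduce matters to $A T_z - T_z^* A = T_{\Phi_0}$, establish by induction that $A T_z^{n} - T_z^{*n} A = T_{\frac{z^n - \bar z^n}{z - \bar z}\Phi_0}$ with the uniform bound $2\|A\|$, and invoke Lemma \ref{lma-Ehr} to conclude $\Phi_0/(z-\bar z) \in L^\infty_{\clb(\cle)}(\T)$ before peeling off the Hankel part. Your one-step telescoping recursion is a marginally cleaner induction than the paper's two-term recurrence, but this is a cosmetic difference, not a different method.
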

\begin{proof}
First, we prove that (1) implies (4). Suppose there are $\Phi, \Psi \in L^\infty_{\clb(\cle)}(\T)$ such that
\[
A = T_\Phi + H_\Psi .
\]
We know that $T_z^* T_\Phi T_z = T_\Phi $ and $H_\Psi T_z =T_z^* H_\Psi$. We repeatedly exploit these identities and compute:
\[
\begin{split}
A T_z + T_z^{*2} A T_z & = T_\Phi T_z + H_\Psi T_z + T_z^{*2} (T_\Phi + H_\Psi) T_z
\\
& = T_\Phi T_z + T_z^* H_\Psi + T_z^*(T_z^* T_\Phi T_z) + T_z^* (T_z^* H_\Psi) T_z
\\
& = T_\Phi T_z + T_z^* H_\Psi + T_z^* T_\Phi + T_z^* (T_z^* H_\Psi) T_z
\\
& = T_z^* T_\Phi T_z^2 + T_z^* (H_\Psi + T_\Phi) + T_z^* H_\Psi T_z^2
\\
& = T_z^* (T_\Phi + H_\Psi) T_z^2 + T_z^* (H_\Psi + T_\Phi).
\end{split}
\]
We conclude that $A T_z + T_z^{*2} A T_z = T_z^* A + T_z^* A T_z^2$. For the converse, we assume that A satisfies the identity in (4). Then
\[
T_z^* (T_z^* A - A T_z) T_z = T_z^*A - A T_z,
\]
which implies that $T_z^* A - A T_z$ is a Toeplitz operator. Therefore, there exists $\Phi_1 \in L^\infty_{\clb(\cle)}(\T)$ such that
\[
T_z^*A-AT_z = T_{\Phi_1}.
\]
Multiplying this by $T_z$ and $T_z^*$, respectively, yields
\[
T_z^*AT_z -	AT_z^2  = T_{z\Phi_1},
\]
and
\[
{T_z^*}^2A-T_z^*AT_z =T_{\bar{z}\Phi_1}.
\]
By adding these two identities, we obtain
\[
{T_z^*}^2A - AT_z^2 = T_{z\Phi_1} + T_{\bar{z}\Phi_1} = T_{(z+\bar{z})\Phi_1},
\]
and hence
\[
{T_z^*}^2A - AT_z^2 = T_{\frac{z^2-\bar{z}^2}{z-\bar{z}}\Phi_1}.
\]
With this calculation and view, we now proceed by induction. Fix a natural number $N > 1$. Assume that
\[
{T_z^*}^nA -AT_z^n = T_{\frac{z^n-\bar{z}^n}{z-\bar{z}}\Phi_1},
\]
for all $1\leq n\leq N$. Then
\[
{T_z^*}^NAT_z -	AT_z^{N+1}  = T_{\frac{z^N-\bar{z}^N}{z-\bar{z}}z\Phi_1},
\]
and
\[
{T_z^*}^{N+1}A-T_z^*AT_z^N =T_{\frac{z^N-\bar{z}^N}{z-\bar{z}}\bar{z}\Phi_1}.
\]
Adding the above two identities yields
\[
{T_z^*}^{N+1}A - AT_z^{N+1} + T_z^*({T_z^*}^{N-1}A  - AT_z^{N-1})T_z = T_{\frac{z^N-\bar{z}^N}{z-\bar{z}}(z+\bar{z})\Phi_1}.
\]
By our induction hypothesis
\[
{T_z^*}^{N+1}A -AT_z^{N+1} + T_{\frac{z^{N-1}-\bar{z}^{N-1}}{z-\bar{z}}\Phi_1} = T_{\frac{z^N-\bar{z}^N}{z-\bar{z}}(z+\bar{z})\Phi_1},
\]
and hence
\[
\begin{split}
{T_z^*}^{N+1}A -AT_z^{N+1} & = T_{[(z^N-\bar{z}^N)(z+\bar{z})-(z^{N-1}-\bar{z}^{N-1})]\frac{\Phi_1}{z-\bar{z}}}
\\
& = T_{\frac{z^{N+1}-\bar{z}^{N+1}}{z-\bar{z}}\Phi_1}.
\end{split}
\]
Therefore, by induction, we conclude that
\[
{T_z^*}^nA -AT_z^n = T_{\frac{(z^n-\bar{z}^n)\Phi_1}{z-\bar{z}}},
\]
for all $n \in \Z_+$. By the norm identity of vector-valued Toeplitz operators \cite[Section 6.2, page 110]{RR}, we have
\[
\left\|T_{\frac{(z^n-\bar{z}^n)\Phi_1}{z-\bar{z}}}\right\| = \left\|\frac{z^n-\bar{z}^n}{z-\bar{z}}\Phi_1 \right\|_{\infty},
\]
and consequently
\[
\left\|\frac{z^n-\bar{z}^n}{z-\bar{z}}\Phi_1 \right\|_{\infty} \leq 2\|A\|.
\]
Moreover, observe that
\[
\begin{split}
\left\|\frac{z^n-\bar{z}^n}{z-\bar{z}}\Phi_1 \right\|_{\infty} & = \text{ess} \sup_{z\in \T}\left\|\frac{z^n-\bar{z}^n}{z-\bar{z}}\Phi_1(z)\right\|
\\
& = \text{ess} \sup_{z\in \T}\left\|\frac{1-z^{2n}}{z-\bar{z}}\Phi_1(z) \right\|.
\end{split}
\]
Lemma \ref{lma-Ehr} yields
\[
\Phi:= \frac{1}{\bar{z}-z} \Phi_1 \in L^\infty_{\clb(\cle)}(\T).
\]
It is clear from the definition of $\Phi$  that $T_{\Phi_1} = T_{(\bar{z}-z)\Phi}$, which implies
\[
T_{\Phi_1} = T_z^*T_{\Phi} - T_{\Phi} T_z.
\]
On the other hand, by the definition of $\Phi_1$, we know that $T_{\Phi_1} = T_z^* A - A T_z$, and hence
\[
T_z^*T_{\Phi} - T_{\Phi} T_z = T_z^*A-AT_z,
\]
or equivalently
\[
T_z^*(A- T_{\Phi})= (A- T_{\Phi})T_z.
\]
This is the same as saying that $A- T_{\Phi}$ is a Hankel operator. Therefore, there exists $\Psi \in L^\infty_{\clb(\cle)}(\T)$ such that $A = T_{\Phi} + H_{\Psi}$. This proves (1) and completes the proof of the equivalence of (1) and (4).

\noindent The equivalence between (2) and (4) follows immediately once we rewrite condition (4) as
\[
T_z^* (A T_z - T_z^*A) T_z = A T_z - T_z^*A.
\]
Indeed, the above identity is equivalent to the statement that $A T_z - T_z^*A$ is a Toeplitz operator. Similarly, condition (4), rewritten as
\[
(T_z^* A T_z - A) T_z = T_z^* (T_z^* A T_z - A),
\]
leads directly to the equivalence between (3) and (4).
\end{proof}

Now we turn to the uniqueness of the symbols representing Toeplitz + Hankel operators: Let $\cle$ be a Hilbert space and let $A\in \clb (H^2_{\cle}(\T))$. If $A$ is a Toeplitz + Hankel operator, then there exist a unique symbol $\Phi \in L^\infty_{\clb(\cle)}(\T)$ and a symbol $\Psi \in L^\infty_{\clb(\cle)}(\T)$ unique up to translation by functions from $\overline{zH^\infty_{\clb(\cle)}(\D)}$ such that
\[
A = T_\Phi + H_\Psi,
\]
where $\overline{zH^\infty_{\clb(\cle)}(\D)}$ denotes the set of all $\Phi \in L^\infty_{\clb(\cle)}(\T)$ of the form
\[
\Phi = \sum_{n=1}^{\infty} \Phi_n e^{- i n \theta},
\]
where $\Phi_n \in \clb(\cle)$ for all $n \geq 1$.

\noindent \textit{Proof of the claim:} Suppose there exist symbols $\Phi, \Phi_1, \Psi, \Psi_1 \in L^\infty_{\clb(\cle)}(\T)$ such that
\[
A = T_\Phi + H_\Psi = T_{\Phi_1} + H_{\Psi_1}.
\]
Since
\[
A T_z - T_z^* A = T_\Phi T_z - T_z^* T_\Phi = T_{\Phi_1} T_z - T_z^* T_{\Phi_1},
\]
and, in general
\[
T_{\chi} T_z - T_z^* T_{\chi} = T_{(z - \bar z) \chi} \qquad (\chi \in L^\infty_{\clb(\cle)}(\T)),
\]
it follows that $T_{(z-\bar{z})(\Phi -\Phi_1) }=0$. Then
\[
\|T_{(z-\bar{z})(\Phi -\Phi_1) }\| = \|(z-\bar{z})(\Phi -\Phi_1)\|_\infty,
\]
implies that
\[
(z-\bar{z})(\Phi -\Phi_1) =0,
\]
and hence $\Phi = \Phi_1 $ almost everywhere on $\T$. Now we have that $H_\Psi= H_{\Psi_1}$. Then \cite[Lemma 3.2, Chapter IX]{FF} yields
\[
\Psi - \Psi_1 \in \overline{zH^\infty_{\clb(\cle)}(\D)}.
\]
This completes the proof of the claim.

\section{Toeplitz + non-injective Hankel operators}\label{sec TH scalar}

Hankel operators that are not injective are particularly interesting. It is well known and also will be outlined soon that the kernel of a Hankel operator is shift invariant. Therefore, if the kernel of a Hankel operator is nonzero (that is, if the Hankel operator is not injective), then it is automatically associated with an inner function. In light of this perspective, this section offers yet another proof of the classifications of the scalar-valued Toeplitz + non-injective Hankel operators. Although establishing a weaker version of the general classification is something that should worry one right away, there are at least two good reasons for doing so. First, this special case's proof is based more on classical function theory. Second, the function theory-based element of the proof establishes enhanced interactions among the symbols of the Toeplitz and the non-injective Hankel operators.

The first component of the function-theoretic approach is the inner function-based representations of closed subspaces of the Hardy space. Recall that a function $\theta \in H^\infty(\D)$ is said to be inner if the radial limit function of $\theta$ satisfies the following condition (see \cite[Chapter 3]{FM} for further details):
\[
|\theta(z)| = 1,
\]
for all $z \in \T$ a.e. Inner functions are of importance in the theory of the Hardy space because, among many other reasons, they parameterize invariant subspaces of the shift operators. To be more specific, a nonzero closed subspace $\cls \subseteq H^2(\T)$ is invariant under $T_z$ if and only if there is an inner function $\theta \in H^\infty(\D)$ that makes
\[
\cls = \theta H^2(\T).
\]
This inner function is unique up to the multiplication of the circle group $\T$. This is one of Beurling's most famous and well-known results \cite{Beurling}. We note that $\theta$ inner makes the (analytic) Toeplitz operator $T_\theta$ isometric, implying right away that the subspace $\theta H^2(\T)$ is closed.

The second tool is a distinctive feature of Hankel operators. Recall that a bounded linear operator $H$ on $H^2(\T)$ is Hankel if and only if
\[
T_z^*H = H T_z.
\]
In particular, if $H_\vp$ is a Hankel operator with the symbol $\vp \in L^\infty(\T)$, then the above intertwiner property in particular implies that
\[
T_z \ker H_\vp \subseteq \ker H_\vp,
\]
that is, $\ker H_\vp$ is invariant under the shift $T_z$ on $H^2(\T)$. By the Beurling theorem, if $\ker H_\vp\neq\{0\}$ then there exists an inner function $\theta \in H^\infty(\D)$ such that
\begin{equation}\label{eqn  ker H}
\ker H_\vp = \theta H^2(\T).
\end{equation}
On the other hand, it is well known (cf. \cite[page 15]{Peller book}) that for any inner function $\theta \in H^\infty(\D)$, there is always a symbol $\vp \in L^\infty(\T)$ such that \eqref{eqn  ker H} holds.

In the scalar case, Theorem \ref{thm TH vv} now modifies as follows:

\begin{theorem}\label{main-th-T+H}
Let $A \in \clb(H^2(\T))$. Then $A$ is a sum of a Toeplitz and a non-injective Hankel operator if and only if $T_z^*A T_z -A$ is a non-injective Hankel operator. Additionally, if
\[
{T_z^*}A T_z -A = H_{\psi_1},
\]
for some $\psi_1 \in L^\infty(\T)$, then there exist an inner function $\theta \in H^\infty(\D)$ and a symbol $\vp_2 \in L^\infty(\T)$ such that
\[
\ker H_{\psi_1} = \theta H^2(\T),
\]
and
\[
A T_\theta = T_{\vp_2},
\]
and $A = T_\vp + H_\psi$ for some symbol $\psi \in L^\infty(\T)$, where $\vp = \overline{\theta} \vp_2$.
\end{theorem}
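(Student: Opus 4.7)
The plan is to build on Theorem \ref{thm TH vv} (which already characterises general Toeplitz + Hankel sums) and extract the extra function-theoretic information forced by non-injectivity using Beurling's theorem on the invariant subspaces of the shift.

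For the forward direction, if $A = T_\vp + H_\psi$ with $\ker H_\psi \neq \{0\}$, Theorem \ref{thm TH vv} immediately gives that $T_z^* A T_z - A$ is Hankel. Using $T_z^* T_\vp T_z = T_\vp$, the Toeplitz contribution drops out, leaving $T_z^* A T_z - A = T_z^* H_\psi T_z - H_\psi$. By Beurling, $\ker H_\psi = \theta_0 H^2(\T)$ for some inner $\theta_0$, and this subspace is $T_z$-invariant; so every $f \in \ker H_\psi$ satisfies both $H_\psi f = 0$ and $H_\psi T_z f = 0$, hence $f \in \ker(T_z^* A T_z - A)$. Thus the Hankel operator $T_z^*AT_z - A$ is non-injective.

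For the converse, assume $T_z^* A T_z - A = H_{\psi_1}$ with $\ker H_{\psi_1} = \theta H^2(\T)$ (Beurling). The first move is to produce $A T_\theta$ as a Toeplitz operator. A direct computation gives
\[
T_z^* (A T_\theta) T_z - A T_\theta = (T_z^* A T_z - A) T_\theta = H_{\psi_1} T_\theta = 0,
\]
since $T_\theta$ has range $\theta H^2(\T) = \ker H_{\psi_1}$. The Brown--Halmos characterization of Toeplitz operators then yields $\vp_2 \in L^\infty(\T)$ with $A T_\theta = T_{\vp_2}$. I then set $\vp = \overline{\theta} \vp_2 \in L^\infty(\T)$; since $\theta \in H^\infty(\D)$ and $\overline{\theta}\theta = 1$ on $\T$, one checks that $T_\vp T_\theta = T_{\vp\theta} = T_{\vp_2}$.

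The last step assembles the non-injective decomposition, and is where I expect the main technical obstacle. Theorem \ref{thm TH vv} delivers a representation $A = T_\Phi + H_\Psi$ for some symbols $\Phi, \Psi$; comparing with $A T_\theta = T_\vp T_\theta$ produces the identity $T_{(\Phi - \vp)\theta} = -H_\Psi T_\theta$. The right-hand side satisfies the Hankel intertwining relation $T_z^* X = X T_z$ (because $T_z^* H_\Psi = H_\Psi T_z$ and $T_z$ commutes with $T_\theta$), while the left-hand side satisfies the Toeplitz relation $T_z^* X T_z = X$. The key lemma I will need is that the only operator on $H^2(\T)$ satisfying both intertwiners is zero: combining them forces $(T_z^*)^2 X = X$, and $\ker((T_z^*)^2 - I) = \{0\}$ on $H^2(\T)$ by a one-line Fourier coefficient argument. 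This gives $(\Phi - \vp)\theta = 0$ and $H_\Psi T_\theta = 0$, that is, $\Phi = \vp$ and $\theta H^2(\T) \subseteq \ker H_\Psi$. Setting $\psi = \Psi$, I conclude that $A = T_\vp + H_\psi$ with $\vp = \overline{\theta}\vp_2$ and $H_\psi$ non-injective, which completes the proof.
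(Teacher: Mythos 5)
Your proposal is correct, and while the forward direction and the derivation of $A T_\theta = T_{\vp_2}$ match the paper's proof essentially step for step (the paper routes the kernel argument through the symbol identity $T_z^*H_\psi T_z - H_\psi = H_{(\bar z^2-1)\psi}$, whereas you use the shift-invariance of $\ker H_\psi$ directly, which amounts to the same thing), your converse takes a genuinely different path to the decomposition. The paper stays self-contained: it observes that $A T_z - T_z^*A$ is a Toeplitz operator $T_{\vp_1}$, multiplies on the right by $T_\theta$ to get $T_{\vp_1\theta} = T_{\vp_2}T_z - T_z^*T_{\vp_2} = T_{(z-\bar z)\vp_2}$, reads off $\vp_1 = (z-\bar z)\overline{\theta}\vp_2$, and concludes that $A - T_{\overline{\theta}\vp_2}$ satisfies the Hankel intertwining relation. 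You instead import the decomposition $A = T_\Phi + H_\Psi$ from Theorem \ref{thm TH vv} and then pin down $\Phi = \overline{\theta}\vp_2$ via the rigidity lemma that an operator $X$ satisfying both $T_z^*XT_z = X$ and $T_z^*X = XT_z$ must vanish, because $(T_z^*)^2X = X$ forces $\operatorname{ran}X \subseteq \ker((T_z^*)^2 - I) = \{0\}$; this simultaneously yields $H_\Psi T_\theta = 0$ and hence the non-injectivity of the Hankel part in one stroke. Your lemma is correct and is a clean, reusable fact (it also immediately gives uniqueness of the Toeplitz summand). The trade-off is that the paper's argument is deliberately independent of the vector-valued Theorem \ref{thm TH vv} --- the section is explicitly advertised as ``another proof'' built on classical function theory --- whereas yours is shorter but logically parasitic on the general theorem, so it does not serve as an alternative proof of the equivalence itself.
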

\begin{proof}
Suppose $A= T_\vp + H_\psi$ for some symbols $\vp, \psi \in L^\infty(\T)$, and assume that
\[
\ker H_\psi \neq \{0\}.
\]
Then
\[
T_z^*A T_z -A = T_z^*H_\psi T_z -H_\psi = H_{(\bar{z}^2 -1)\psi},
\]
which is a Hankel operator. Also, for any nonzero $f$ in $\ker H_\psi$, we have, in particular
\[
T_{z^2 -1}^*H_\psi(f) =0,
\]
which implies
\[
H_{(\bar{z}^2 -1)\psi} f =0.
\]
Consequently
\[
\ker (T_z^*A T_z -A) \neq \{0\}.
\]
This means $T_z^*A T_z -A$ is a non-injective Hankel operator.

\noindent Conversely, suppose $T_z^*A T_z -A = H_{\psi_1}$ for some symbol $\psi_1 \in L^\infty(\T)$. Assume that $H_{\psi_1}$ is non-injective. By the discussion preceding the statement of this theorem, or more specifically by \eqref{eqn  ker H}, there exists a nonconstant inner function $\theta \in H^\infty(\D)$ such that
\[
\ker H_{\psi_1}  = \theta H^2(\T).
\]
Therefore, for any $f\in H^2(\T)$, we know that $H_{\psi_1}(\theta f) = 0$, which is equivalent to saying that
\[
H_{\psi_1}T_\theta=0.
\]
Consequently
\[
(T_z^*A T_z -A) T_\theta = H_{\psi_1}T_\theta = 0,
\]
and hence
\[
T_z^*A T_\theta T_z = A T_\theta.
\]
This says that $A T_\theta$ is a Toeplitz operator. Then there exists $\vp_2 \in L^\infty(\T)$ such that
\begin{equation}\label{T_2}
A T_\theta = T_{\vp_2}.
\end{equation}
Moreover,
\[
T_z^*(T_z^*A T_z -A) -(T_z^*A T_z -A) T_z= 0,
\]
implies that
\[
T_z^*(A T_z -T_z^*A) T_z = A T_z -T_z^*A.
\]
Therefore, there exists $\vp_1 \in L^\infty(\T)$ such that
\[
T_{\vp_1} = A T_z -T_z^*A.
\]
By multiplying the identity $T_{\vp_1} = A T_z -T_z^*A$ by $T_\theta$ from the right-hand side and then using the above equation \eqref{T_2}, we obtain
\begin{align*}
T_{\vp_1\theta} & = (A T_z -T_z^*A) T_\theta
\\
& = AT_\theta T_z-T_z^*AT_\theta
\\
& = T_{\vp_2} T_z -T_z^* T_{\vp_2}.
\end{align*}
Making use of the fact that $\theta$ is an inner function, we have the representation of $T_{\vp_1}$ as
\[
T_{\vp_1} =  T_{\bar{\theta}\vp_2} T_z -T_z^* T_{\bar{\theta}\vp_2}.
\]
Since $T_{\vp_1} = A T_z - T_z^* A$, it follows that
\[
A T_z - T_z^* A = T_{\bar{\theta}\vp_2} T_z -T_z^* T_{\bar{\theta}\vp_2},
\]
and hence
\[
(A - T_{\bar{\theta}\vp_2}) T_z = T_z^* (A - T_{\bar{\theta}\vp_2}).
\]
That is, $(A - T_{\bar{\theta}\vp_2})$ is a Hankel operator. Therefore, there exists a symbol $\psi \in L^\infty(\T)$ such that
\[
H_\psi = (A - T_{\bar{\theta}\vp_2}),
\]
which implies $A = H_\psi + T_{\bar{\theta}\vp_2}$. Thus $A = H_\psi + T_\vp$, where
\[
\vp = \bar{\theta}\vp_2 \in L^\infty(\T).
\]
It is left to show that $H_\psi$ is non-injective. Let us just observe that
\[
T_z^*A T_z -A = H_{(\bar{z}^2 -1)\psi}= H_{\psi_1}.
\]
That means
\[
\ker H_{(\bar{z}^2 -1)\psi} \neq \{0\}.
\]
For any nonzero $f \in \ker H_{(\bar{z}^2 -1)\psi}$, we have
\[
H_{\psi}T_{{z}^2 -1} f =0,
\]
from which, we conclude that
\[
\ker H_\psi \neq \{0\}.
\]
In the preceding, we exploited the understanding that $T_{{z}^2 -1}$ is an analytic Toeplitz operator, meaning 
\[
T_{{z}^2 -1} f \neq 0,
\]
as $f\neq 0$. Hence, the proof is complete.
\end{proof}

Clearly, the representing symbols $\vp$ and $\psi$ of the Toeplitz + Hankel operator in the above theorem are more informative. This is due to function theoretic techniques, which are less effective in the vector-valued case.

Now we will take a look into the scalar version (that is, $\cle = \mathbb{C}$) of Theorem \ref{thm TH vv}'s equivalence of (1) and (4) through the lens of the inner product. In view of the orthonormal basis $\{z^n\}_{n \in \Z_+}$ of $H^2(\T)$, it is easy to see that condition (4) is equivalent to the following identity:
\[
\langle A z^m, z^{n-1} \rangle + \langle A z^m, z^{n+1} \rangle = \langle A z^{m-1}, z^n \rangle + \langle A z^{m+1}, z^n\rangle,
\]
for all $m, n \geq 1$. This identity has been recently proved by Ehrhardt, Hagger, and Virtanen in \cite[Theorem 20]{Ehr}. For finite matrices, this was earlier obtained by Bevilacqua, Bonannie, and Bozzo \cite{Bevil} (also see Strang and MacNamara \cite[Section 8]{GS}): An $N\times N$ complex matrix $A=(a_{i,j})_{i, j=1}^N$ is Toeplitz + Hankel if and only if
$$
a_{i-1, j}+a_{i+1, j}=a_{i, j-1}+a_{i, j+1}
$$
for all $1< i, j<N$.

We note that our proofs of classifications of scalar (the non-injective Hankel operator case) and vector-valued Toeplitz + Hankel operators restricted to the above particular situations deviate significantly from the respective proofs. We add, however, that our proof of Theorem \ref{thm TH vv} uses a vector-valued version of a lemma from \cite{Ehr} (see Lemma \ref{lma-Ehr} for more details).

\section{Paired operators on $L^2(\T)$}\label{sec paired on L}

The goal of this section is to fully classify paired operators acting on $L^2(\T)$. Recall that $P_+$ denotes the orthogonal projection of $L^2(\T)$ onto the closed subspace $H^2(\T) \subseteq L^2(\T)$, and $P_- = I_{L^2(\T)} - P_+$. Therefore
\[
P_- = P_{H^2(\T)^\perp}.
\]
Now we observe a general fact about projections $P_+$ and $P_-$. Since $M_z(H^2(\T))\subseteq H^2(\T)$, it follows that
\begin{equation}\label{eqn PzP}
P_- M_z P_+ = 0.
\end{equation}
We will use this identity in what follows. We also need to recollect a standard fact about certain bounded linear operators on $L^2(\T)$ \cite[Lemma 8.12]{FM}: Let $\vp \in L^2(\T)$. If there exists $M > 0$ such that
\[
\|p \vp\| \leq M \|p\|,
\]
for all trigonometric polynomials $p$, then $\vp \in L^\infty(\T)$, and
\begin{equation}\label{eqn M vp}
\|M_\vp\| = \|\vp\|_\infty \leq M.
\end{equation}
Now we are ready for the classification of paired operators on $L^2(\T)$.

\begin{theorem}\label{SNJ_Th1}
Let $X \in \clb(L^2(\T))$. Then $X$ is a paired operator if and only if
\[
X= M_z^* X M_z P_{+} + M_z X M_z^*P_{-}.
\]
Moreover, if there exist $\vp, \psi \in L^\infty(\T)$ such that
\[
X=S_{\vp, \psi},
\]
then the symbols $\vp$ and $\psi$ in the representation of $X$ are unique.
\end{theorem}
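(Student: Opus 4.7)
For the forward direction my plan is a direct computation. If $X = M_\vp P_+ + M_\psi P_-$, identity \eqref{eqn PzP} is equivalent to $M_z P_+ = P_+ M_z P_+$, so $M_z P_+ f \in H^2(\T)$ for every $f \in L^2(\T)$. Hence $X M_z P_+ = M_\vp M_z P_+$, and applying $M_z^*$ on the left together with $M_z^* M_z = I_{L^2(\T)}$ (unimodularity of $z$ on $\T$) yields $M_z^* X M_z P_+ = M_\vp P_+$. Taking adjoints of \eqref{eqn PzP} gives $P_+ M_z^* P_- = 0$, i.e.\ $M_z^*$ leaves $H^2(\T)^\perp$ invariant, and the symmetric computation produces $M_z X M_z^* P_- = M_\psi P_-$. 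Adding the two yields the stated identity.

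For the converse I would right-multiply the assumed identity by $P_+$ and by $P_-$, using $P_+ P_- = P_- P_+ = 0$, to peel off
\[
X P_+ = M_z^* X M_z P_+ \qquad \text{and} \qquad X P_- = M_z X M_z^* P_-.
\]
Since $M_z$ is unitary on $L^2(\T)$ these rearrange to the intertwining relations $M_z X P_+ = X M_z P_+$ and $M_z^* X P_- = X M_z^* P_-$. I would then set $\vp := X(1) \in L^2(\T)$ and $\psi := M_z X M_z^*(1) = z\, X(\bar z) \in L^2(\T)$. A straightforward induction applying the two relations to $z^n$ ($n \geq 0$) and $\bar z^k$ ($k \geq 1$) respectively gives $X(z^n) = z^n \vp$ and $X(\bar z^k) = \bar z^k \psi$, so $X$ acts as multiplication by $\vp$ on analytic polynomials and as multiplication by $\psi$ on co-analytic polynomials.

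Next I would promote $\vp$ and $\psi$ from $L^2(\T)$ to $L^\infty(\T)$ using \eqref{eqn M vp}. For any analytic polynomial $p$ one has $\|p \vp\| = \|X p\| \leq \|X\| \|p\|$. An arbitrary trigonometric polynomial $q$ is of the form $q = \bar z^N p$ for some $N \geq 0$ and some analytic polynomial $p$; since $|z| = 1$ on $\T$, $\|p\| = \|q\|$ and $\|p \vp\| = \|q \vp\|$, so the bound $\|q \vp\| \leq \|X\| \|q\|$ extends to every trigonometric polynomial. Then \eqref{eqn M vp} places $\vp$ in $L^\infty(\T)$, and the parallel argument gives $\psi \in L^\infty(\T)$. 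Density of polynomials in $H^2(\T)$ and of co-analytic polynomials in $H^2(\T)^\perp$, combined with continuity of $X$, upgrades the pointwise formulas on polynomials to $X P_+ = M_\vp P_+$ and $X P_- = M_\psi P_-$, whence $X = S_{\vp, \psi}$.

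For uniqueness, if $S_{\vp, \psi} = 0$, evaluating at $1 \in H^2(\T)$ gives $\vp = 0$ and evaluating at $\bar z \in H^2(\T)^\perp$ gives $\psi \bar z = 0$, hence $\psi = 0$. The one delicate point I anticipate is the $L^\infty$ upgrade, where one must carefully translate the analytic- and co-analytic-polynomial estimates into an estimate for all trigonometric polynomials via unimodularity of $z$ on $\T$ before \eqref{eqn M vp} can be invoked; the remainder is essentially bookkeeping with the two intertwining identities.
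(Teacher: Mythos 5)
Your proposal is correct and follows essentially the same route as the paper's proof: the same direct computation with $P_-M_zP_+=0$ for the forward direction, and for the converse the same peeling off of $XP_\pm$, the intertwining relations, the definitions $\vp=X(1)$ and $\psi=zX(\bar z)$, the shift-by-$z^N$ trick to get the trigonometric-polynomial bound feeding into \eqref{eqn M vp}, and the density argument. The only (harmless) deviation is that you prove uniqueness directly by evaluating $S_{\vp,\psi}$ at $1$ and at $\bar z$, whereas the paper cites \cite[Corollary 2.4]{Cam}; your version is self-contained and equally valid.
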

\begin{proof}
Assume first that $X=S_{\vp, \psi}$ for some $\vp, \psi \in L^\infty(\T)$. In  view of \eqref{eqn PzP}, we have
\begin{align*}
M_z^* S_{\vp, \psi} M_z P_ + & = M_{\overline{z}}(M_\vp P_{+} + M_\psi P_{-}) M_z P_+
\\
& = M_{\overline{z}} M_\vp P_+ M_z P_+
\\
& = M_\vp P_+.
\end{align*}
Also, by \eqref{eqn PzP}, we have $P_+ M_{\overline{z}} P_- = 0$, and hence
\begin{align*}
M_z S_{\vp, \psi} M_z^* P_- & = M_{z} (M_\vp P_{+} + M_\psi P_{-}) M_{\overline{z}} P_-
\\
& = M_{z} M_\psi P_ - M_{\overline{z}} P_-
\\
& = M_\psi P_-.
\end{align*}
It is now evident that
\[
\begin{split}
S_{\vp, \psi} & = M_\vp P_+ + M_\psi P_-
\\
& = M_z^* S_{\vp, \psi} M_z P_+ + M_z S_{\vp, \psi} M_z^* P_-,
\end{split}
\]
which proves the necessary part of the theorem. For the reverse direction, assume that
\[
X= M_z^* X M_z P_{+} + M_z X M_z^*P_{-}.
\]
It follows that $X P_+ = M_z^* X M_z P_{+}$, and hence
\[
X M_z P_+ = M_z X P_+.
\]
By using this repeatedly, we obtain, for each $n\geq 1$, that
\[
\begin{split}
X(z^n) & = X P_+ M_z^n(1)
\\
& = X M_z P_+ M_z^{n-1}(1)
\\
& = M_z X P_+ M_z^{n-1}(1),
\end{split}
\]
and hence $X(z^n) = M_z^n X(1)$. Set
\[
\vp :=X(1).
\]
Now, since $X z^n = z^n \vp$ for all $n \geq 0$, we have
\[
X p = p \vp \qquad (p \in \mathbb{C}[z]).
\]
Let us now note that for a given trigonometric polynomial $p \in L^2(\T)$, there exists $m\in \mathbb{Z}_+$ such that
\[
z^m p \in \mathbb{C}[z] \subseteq H^2(\T).
\]
Moreover, observe, in general, that
\[
\|z^mg\|_2 = \|g\|_2,
\]
for all $g\in L^2(\T)$. Therefore, for a trigonometric polynomial $p \in L^2(\T)$, with the above setting, we compute
\[
\begin{split}
\|\vp p \|_2 & = \|\vp z^m p \|_2
\\
& =\|X(z^mp)\|_2
\\
& \leq\|X\|\|z^mp\|_2
\\
& =\|X\|\|p\|_2.
\end{split}
\]
By the fact recorded in \eqref{eqn M vp}, we immediately conclude that $\vp \in L^\infty(\T)$. Fix $f\in H^2(\T)$. There exists a sequence of polynomials $\{p_n\}_{n=1}^\infty \subseteq H^2(\T)$ such that
\[
p_n \longrightarrow f,
\]
in $L^2(\T)$. Therefore
\begin{align*}
\|X f - \vp f\|_2 & \leq \|X f - X p_n \|_2 + \|X p_n - \vp p_n\|_2 + \|\vp p_n - \vp f\|_2
\\
& \leq\|X\| \|f-p_n\|_2 + \|\vp p_n - \vp p_n\|_2 + \|\vp\|_\infty \|p_n-f\|_2
\\
&=(\|X\|+\|\vp\|_\infty)\|p_n-f\|_2
\\
& \longrightarrow 0.
\end{align*}
This implies
\[
X f = \vp f \qquad (f\in H^2(\T)),
\]
or, in other words
\begin{equation}\label{SNJ_Eq4}
XP_+=M_\vp P_+.
\end{equation}
Once again, we observe that $X= M_z^* X M_z P_{+} + M_z X M_z^*P_{-}$ implies that
\[
XP_- = M_z X M_z^* P_{-},
\]
from which
it immediately follows that
\[
X M_z^* P_- = M_z^* X P_-.
\]
For each $n \geq 1$, we use the same procedure as before to calculate
\[
\begin{split}
X(\overline{z}^n) & = X M_{z}^{* n-1} P_- (\overline{z})
\\
& =M_{z}^{* n-1} X P_- (\overline{z})
\\
& = M_{z}^{* n-1} X \overline{z}
\\
& = M_{z}^{* n} (z X \overline{z}).
\end{split}
\]
By setting
\[
\psi := z X(\overline{z}),
\]
we have $X(\overline{z}^n) = \psi \overline{z}^n$ for all $n \geq 1$. Clearly, for any polynomial $p\in(H^2(\T))^\perp$, we have
\[
X(p)=\psi p.
\]
Again, for a fixed polynomial $p\in L^2(\T)$, there exists $m\in \Z_+$ such that
\[
\overline{z}^mp\in(H^2(\T))^\perp.
\]
Since $\|\overline{z}^mg\|_2=\|g\|_2$ for all $g\in L^2(\T)$, it follows that
\[
\begin{split}
\|\psi p\|_2 & = \|\psi (\overline{z}^mp)\|_2
\\
& = \|X(\overline{z}^mp)\|_2
\\
& \leq \|X\|\|\overline{z}^mp\|_2
\\
& =\|X\|\|p\|_2.
\end{split}
\]
Therefore, \eqref{eqn M vp} guarantees that $\psi \in L^\infty(\T)$. Furthermore, it is easy to see that for any $f\in (H^2(\T))^\perp=\overline{zH^2(\T)}$, there exists a sequence of polynomials $\{p_n\}_{n=1}^\infty \subseteq (H^2(\T))^\perp$ such that
\[
p_n \longrightarrow f,
\]
in $L^2(\T)$. From here onwards, we can follow exactly similar lines of arguments as in the proof of $(\ref{SNJ_Eq4})$ to conclude that
\[
X f = \psi f \qquad (f\in(H^2(\T))^\perp),
\]
that is, $XP_-=M_\psi P_-$. Combining this and $(\ref{SNJ_Eq4})$, we have
\[
\begin{split}
X & = X (P_+ + P_-)
\\
& =M_\vp P_+ + M_\psi P_-,
\end{split}
\]
which proves the sufficient part of the theorem. For the uniqueness part, suppose $X \in \clb(L^2(\T))$ is a paired operator, and suppose
\[
X=S_{\vp, \psi} =S_{\vp_1, \psi_1},
\]
for some $\vp, \psi, \vp_1, \psi_1\in L^\infty(\T)$. Then
\[
S_{\vp-\vp_1,\psi-\psi_1}=0,
\]
and it follows from \cite[Corollary 2.4]{Cam} that $\vp = \vp_1$, $\psi=\psi_1$.
\end{proof}

Given $\vp, \psi \in L^\infty(\T)$, the \emph{transposed paired operator} $\Sigma_{\vp, \psi}:L^2(\T)\to L^2(\T)$ is defined by (cf. \cite{Cam})
\[
\Sigma_{\vp, \psi} = P_+ M_\vp + P_- M_\psi.
\]
$S_{\vp, \psi}$ and $\Sigma_{\vp, \psi}$ are called \textit{dual} to each other due to the relation (cf. \cite[Proposition 2.1]{Cam})
\[
S_{\vp, \psi}^*=\Sigma_{\bar{\vp}, \bar{\psi}}.
\]

As an immediate consequence of Theorem \ref{SNJ_Th1}, we have the following: Let $X\in\clb(L^2(\T))$. Then $X$ is a transposed paired operator if and only if
\[
X= P_{+}M_z^*XM_z+ P_{-}M_zXM_z^*.
\]
Moreover, if $X=\Sigma_{\vp, \psi}$ for $\vp, \psi \in L^\infty(\T)$, then $\vp$ and $\psi$ are unique representing functions.

\section{Paired operators on $H^2(\T)$}\label{sec paired on H}

The purpose of this section is to examine the concept of paired operators on $H^2(\T)$, or more specifically, paired operators corresponding to model spaces. Model spaces of $H^2(\T)$ are essentially $T_z^*$-invariant closed subspaces of $H^2(\T)$. We start our discussion by revisiting the Beurling theorem from Section \ref{sec TH scalar}: A nonzero closed subspace $\cls \subseteq H^2(\T)$ is $T_z$-invariant if and only if there is an inner function $\theta \in H^\infty(\D)$ (unique up to the multiplication of the group $\T$) such that
\[
\cls = \theta H^2(\T).
\]
The subsequent orthocomplement is then a closed subspace that is invariant under $T_z^*$. This space is also very significant for general research in Hilbert function spaces. We commonly refer to the space in question as the \textit{model space} associated with the inner function $\theta \in H^\infty(\D)$ and denote it by $\clk_\theta$. Therefore
\[
\clk_\theta = H^2(\T) \ominus \theta H^2(\T) \cong H^2(\T)/\theta H^2(\T).
\]
Consequently, given an inner function $\theta \in H^\infty(\D)$,  we have the splitting of the space $H^2(\T)$ as follows:
\[
H^2(\T) = \theta H^2(\T) \oplus \clk_\theta,
\]
and subsequently, we can define a new notion (and many more, just as similar to this) of paired operator. Throughout the rest of this section, $\theta\in H^\infty(\D)$ will be an arbitrary nonconstant inner function.

\begin{definition}
Let $\theta \in H^\infty(\D)$ be an inner function. The $\theta$-paired operator with (ordered) symbols $\vp, \psi \in H^\infty(\D)$ is defined by
\[
S_{\vp, \psi}^\theta = T_{\vp} P_{\theta H^2(\T)} + T_{\psi}P_{\clk_\theta}.
\]
\end{definition}

Clearly, $S_{\vp, \psi}^\theta: H^2(\T) \to H^2(\T)$ is a bounded linear operator. An operator $X \in \clb(H^2(\T))$ is said to be $\theta$-paired, if there exist $\vp$ and $\psi$ in $H^\infty(\D)$ such that
\[
X = S^\theta_{\vp, \psi}.
\]
Our goal is to characterize $\theta$-paired operators. Before we do that, let us go over a common and well-known fact about model spaces. Note that
\[
f \in H^2(\T) \cap \theta \overline{z H^2(\T)},
\]
if and only if $f \in H^2(\T)$ and
\[
f = \theta \bar{z} \bar{g},
\]
for some $g \in H^2(\T)$, which is further equivalent to
\[
M_\theta^* f = \bar{z} \bar{g} \in \overline{z H^2(\T)},
\]
for some $g \in H^2(\T)$. Therefore, we conclude that
\[
\clk_\theta = H^2(\T) \cap \theta \overline{z H^2(\T)}.
\]
Finally, one piece of notation. For a pair of operators $A, B \in \clb(\clh)$, the commutator $[A,B]$ is defined by
\[
[A,B] = AB - BA.
\]
Now we are ready for the characterization of $\theta$-paired operators.

\begin{theorem}\label{SNJ_Th2}
$X\in\clb(H^2(\T))$ is a $\theta$-paired operator if and only if the following two conditions are satisfied:
\begin{enumerate}
\item $\theta H^2(\T)$ is invariant under $X$.
\item There exists a nonzero $h_0\in \clk_\theta$ and $\nu \in H^\infty(\D)$ such that
\[
X h_0 = \nu h_0,
\]
and
\[
[X, T_z] = (X-T_{\nu}) P_{\theta H^2(\T)} T_z P_{\clk_\theta}.
\]
\end{enumerate}
Moreover, if $X=S_{\vp, \psi}^\theta $ is a $\theta$-paired operator, then the representing symbols $\vp, \psi \in H^\infty(\D)$ are unique for fixed $\theta$.
\end{theorem}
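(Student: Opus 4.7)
\emph{Necessity.} Suppose $X = S^\theta_{\vp,\psi}$. Condition (1) is immediate: for $f \in \theta H^2(\T)$, $Xf = T_\vp f = \vp f \in \theta H^2(\T)$ since $\vp \in H^\infty(\D)$. For (2), I would pick any nonzero $h_0 \in \clk_\theta$ (available because $\theta$ is nonconstant) and set $\nu = \psi$; then $Xh_0 = \psi h_0 = \nu h_0$. The commutator identity follows from the $T_z$-invariance of $\theta H^2(\T)$, which yields $[P_{\theta H^2(\T)}, T_z] = P_{\theta H^2(\T)} T_z P_{\clk_\theta} = -[P_{\clk_\theta}, T_z]$; combined with $[T_\vp, T_z] = [T_\psi, T_z] = 0$, a short expansion of $[X, T_z]$ collapses to $(T_\vp - T_\psi) P_{\theta H^2(\T)} T_z P_{\clk_\theta}$, and replacing $T_\vp$ by $X$ on the range $\theta H^2(\T)$ (where they agree) gives the desired $(X - T_\nu) P_{\theta H^2(\T)} T_z P_{\clk_\theta}$.

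\emph{Sufficiency.} Assume (1) and (2). Restricting the commutator identity to $\theta H^2(\T)$, where $P_{\clk_\theta}$ vanishes, gives $XT_z = T_z X$ on $\theta H^2(\T)$. Combined with (1), $X|_{\theta H^2(\T)}$ is an operator on $\theta H^2(\T)$ commuting with $T_z|_{\theta H^2(\T)}$. I would then use the unitary $U : H^2(\T) \to \theta H^2(\T)$, $Uf = \theta f$, which intertwines both copies of $T_z$, and invoke the classical description of the commutant of $T_z$ on $H^2(\T)$ as $\{T_\vp : \vp \in H^\infty(\D)\}$ to produce a unique $\vp \in H^\infty(\D)$ with $X|_{\theta H^2(\T)} = T_\vp|_{\theta H^2(\T)}$.

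The heart of the proof is the analogous multiplicative behavior on $\clk_\theta$: namely $Xh = \nu h$ for every $h \in \clk_\theta$. Define $Y : \clk_\theta \to H^2(\T)$ by $Yh = Xh - \nu h$. For $h \in \clk_\theta$, decompose $T_z h = g + S_\theta h$, where $g = P_{\theta H^2(\T)} T_z h \in \theta H^2(\T)$ and $S_\theta h = P_{\clk_\theta} T_z h$ is the compressed shift. Using $Xg = \vp g$ and $T_\nu g = \nu g$, the commutator identity yields $X(S_\theta h) = T_z X h - \nu g$, and a short rearrangement extracts
\[
Y S_\theta = T_z Y \quad \text{on } \clk_\theta.
\]
The decisive step --- what I expect to be the main obstacle to articulate cleanly --- is to upgrade this one-sided intertwining to a genuine commutant identity on all of $H^2(\T)$ by setting $\tilde Y := Y P_{\clk_\theta} \in \clb(H^2(\T))$. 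A two-case check gives $\tilde Y T_z = T_z \tilde Y$ on $H^2(\T)$: on $\clk_\theta$ this is $YS_\theta = T_z Y$, while on $\theta H^2(\T)$ both sides vanish by $T_z$-invariance. Hence $\tilde Y = T_\eta$ for some $\eta \in H^\infty(\D)$; but $\tilde Y$ annihilates $\theta H^2(\T)$, so $\eta\theta = \tilde Y \theta = 0$, and inner-ness of $\theta$ forces $\eta \equiv 0$, whence $Y \equiv 0$. Setting $\psi = \nu$ yields $X = T_\vp P_{\theta H^2(\T)} + T_\psi P_{\clk_\theta} = S^\theta_{\vp,\psi}$. (I note in passing that the hypothesis $Xh_0 = \nu h_0$ is consistent with this derivation rather than strictly needed in it, serving to pin the parameter $\nu$ meaningfully.)

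\emph{Uniqueness.} If $S^\theta_{\vp,\psi} = S^\theta_{\vp',\psi'}$, evaluating both sides at $\theta \in \theta H^2(\T)$ gives $(\vp-\vp')\theta = 0$ a.e., and inner-ness of $\theta$ (hence $\theta \neq 0$ a.e.\ on $\T$) forces $\vp = \vp'$. Evaluating at any nonzero $h \in \clk_\theta$ (available since $\theta$ is nonconstant) gives $(\psi-\psi')h = 0$; since a nonzero $H^2(\T)$ function vanishes only on a set of $\T$-measure zero, $\psi = \psi'$ a.e.
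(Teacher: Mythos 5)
Your proof is correct, and the sufficiency direction takes a genuinely different route from the paper's. The necessity and uniqueness arguments coincide in substance with the paper's (the paper runs the commutator computation through the explicit decomposition $f=\theta h_1+\theta\bar z\,\overline{h_2}$ rather than through the identity $[P_{\theta H^2(\T)},T_z]=P_{\theta H^2(\T)}T_zP_{\clk_\theta}$, but the content is the same). In the converse, the paper gets $XP_{\theta H^2(\T)}=T_\vp P_{\theta H^2(\T)}$ by writing $P_{\theta H^2(\T)}=T_\theta T_{\overline{\theta}}$ and extracting $XT_\theta=T_\chi$ with $\chi=\theta\vp$; your unitary conjugation by $U=T_\theta$ together with the commutant of the shift is an equivalent path. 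The real divergence is on the $\clk_\theta$ side: the paper forms the auxiliary operator $A=XP_{\clk_\theta}+T_\nu P_{\theta H^2(\T)}$, verifies $T_zA=AT_z$ by a longer chain of projection identities, concludes $A=T_\psi$, and only then invokes the eigenvector hypothesis $Xh_0=\nu h_0$ to force $\psi=\nu$. You instead form $\tilde Y=(X-T_\nu)P_{\clk_\theta}$, check that it commutes with $T_z$ and annihilates $\theta H^2(\T)$, and conclude $\tilde Y=T_\eta$ with $\eta\theta=0$, hence $\tilde Y=0$. Your version is slightly leaner and carries a genuine payoff that you correctly flag: the hypothesis $Xh_0=\nu h_0$ is redundant, being a consequence of condition (1) and the commutator identity alone, whereas the paper uses it as an essential input to identify $\psi$ with $\nu$. (The paper could also have avoided it by applying $A=T_\psi$ to $\theta$, but it does not.) Both arguments establish the theorem; yours isolates more sharply which hypotheses do the work.
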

\begin{proof}
Suppose $X=S_{\vp, \psi}^\theta$ is a $\theta$-paired operator. For $h\in H^2(\T)$, we have
\[
S_{\vp, \psi}^\theta(\theta h) = \vp \theta h \in \theta H^2(\T),
\]
implying the invariance condition in (1). To verify (2), fix $f \in H^2(\T)$, and write
\[
f = \theta h_1 + \theta\bar{z} \overline{h_2},
\]
where $h_1, h_2\in H^2(\T)$ and $\theta\overline{zh_2}\in H^2(\T) \cap \theta\overline{zH^2(\T)}$. Now, we compute
\[
\begin{split}
S_{\vp, \psi}^\theta T_z f & = (T_\vp P_{\theta H^2(\T)} + T_\psi P_{\clk_\theta})(z \theta h_1 + z \theta \bar{z} \overline{h_2})
\\
& = z \vp \theta h_1 + T_\vp P_{\theta H^2(\T)} (z \theta \bar{z} \overline{h_2}) + T_\psi P_{\clk_\theta} (z \theta \bar{z} \overline{h_2})
\\
& = z \vp \theta h_1 + T_\vp P_{\theta H^2(\T)} (\theta \overline{h_2}) + T_\psi P_{\clk_\theta} (\theta \overline{h_2}),
\end{split}
\]
and, on the other hand, we also have
\[
\begin{split}
T_z S_{\vp, \psi}^\theta f & = T_z (T_\vp P_{\theta H^2(\T)} + T_\psi P_{\clk_\theta})(\theta h_1 + \theta \bar{z} \overline{h_2})
\\
& = T_z (T_\vp \theta h_1 + T_\psi \theta \bar{z} \overline{h_2})
\\
& = z \vp \theta h_1 + \psi \theta \overline{h_2}.
\end{split}
\]
As a result, we have
\begin{align*}
[X, T_z](f) & = S_{\vp, \psi}^\theta T_z f - T_z S_{\vp, \psi}^\theta f
\\
& = \vp P_{\theta H^2(\T)}(\theta \overline{h_2}) - \psi (I - P_{\clk_\theta}) (\theta \overline{h_2})
\\
& = \vp P_{\theta H^2(\T)}(\theta \overline{h_2}) - \psi P_{\theta H^2(\T)} (\theta \overline{h_2})
\\
& = X P_{\theta H^2(\T)}(\theta \overline{h_2}) - \psi P_{\theta H^2(\T)}(\theta \overline{h_2}).
\end{align*}
But
\[
P_{\theta H^2(\T)} T_z P_{\clk_\theta} f = P_{\theta H^2(\T)}(\theta \overline{h_2}),
\]
and hence
\[
[X, T_z](f) = (X-T_{\psi})P_{\theta H^2(\T)} T_z P_{\clk_\theta}(f).
\]
Define $\nu=\psi \in H^\infty(\D)$. Clearly, for all $h_0\in \clk_\theta$, we have $\nu h_0 = \psi h_0$. This proves the necessary part of the theorem. For the converse direction, assume that $X\in \clb(H^2(\T))$ satisfies the conditions (1) and (2). It is readily seen from condition (2) that $[X, T_z] P_{\theta H^2(\T)} = 0$, that is
\[
X T_z P_{\theta H^2(\T)} = T_z X P_{\theta H^2(\T)}.
\]
Since $P_{\theta H^2(\T)} = T_\theta T_{\overline{\theta}}$ (cf.  \cite[Theorem 14.11]{FM}), it follows that
\[
XT_zT_\theta T_{\overline{\theta}}=T_zXT_\theta T_{\overline{\theta}}.
\]
Multiplying both sides of the equality by $T_\theta$ from the right-hand side, we get
\[
XT_\theta T_z=T_zXT_\theta,
\]
and therefore there exists $\chi \in H^\infty(\mathbb{D})$ such that
\[
XT_\theta=T_\chi.
\]
At the same time, the fact that $X$ leaves $\theta H^2(\T)$ invariant ensures existence of $\vp \in H^2(\T)$ such that
\[
\chi = X \theta = \theta \vp.
\]
This asserts that $\vp\in L^\infty(\T)\cap H^2(\T)$, that is, $\vp\in H^\infty(\mathbb{D})$. As a result, we have
\[
\begin{split}
X T_\theta T_{\overline{\theta}} & = T_{\chi}T_{\overline{\theta}}
\\
& = T_{\vp\theta}T_{\overline{\theta}}
\\
& =T_\vp T_\theta T_{\overline{\theta}}.
\end{split}
\]
In other words
\begin{equation}\label{SNJ_Eq7}
XP_{\theta H^2(\T)}=T_{\vp}P_{\theta H^2(\T)}.
\end{equation}
Define $A\in\clb(H^2(\T))$ by
\[
A=XP_{\clk_\theta} + T_\nu P_{\theta H^2(\T)},
\]
where $\nu$ is as defined in the condition (2). Also, the second part of condition (2) implies
\[
(X T_z - T_z X)P_{\clk_\theta} = (X-T_{\nu}) P_{\theta H^2(\T)} T_z P_{\clk_\theta}.
\]
In particular, we have
\[
T_z X P_{\clk_\theta} = X T_z P_{\clk_\theta} - (X-T_{\nu}) P_{\theta H^2(\T)} T_z P_{\clk_\theta}.
\]
We compute
\begin{align*}
T_z A & = T_z (XP_{\clk_\theta}+T_\nu P_{\theta H^2(\T)})
\\
& = T_z X P_{\clk_\theta} + T_z T_\nu P_{\theta H^2(\T)}
\\
&=X T_z P_{\clk_\theta} - X P_{\theta H^2(\T)} T_z P_{\clk_\theta} + T_\nu P_{\theta H^2(\T)} T_z P_{\clk_\theta} + T_\nu T_z P_{\theta H^2(\T)}
\\
&= X (I - P_{\theta H^2(\T)}) T_z P_{\clk_\theta} + T_\nu P_{\theta H^2(\T)} T_z P_{\clk_\theta} + T_\nu T_z P_{\theta H^2(\T)}
\\
& = X P_{\clk_\theta} T_z P_{\clk_\theta} + T_\nu P_{\theta H^2(\T)} T_z P_{\clk_\theta} + T_\nu T_z P_{\theta H^2(\T)}
\\
& = X P_{\clk_\theta} T_z P_{\clk_\theta} + T_\nu P_{\theta H^2(\T)} T_z P_{\clk_\theta} + T_\nu P_{\theta H^2(\T)} T_z P_{\theta H^2(\T)}
\\
& = X P_{\clk_\theta} T_z P_{\clk_\theta} + T_\nu P_{\theta H^2(\T)} T_z P_{\clk_\theta} + (X P_{\clk_\theta} + T_\nu P_{\theta H^2(\T)}) T_z P_{\theta H^2(\T)}
\\
& = A T_z P_{\clk_\theta} + A T_z P_{\theta H^2(\T)}
\\
&=A T_z,
\end{align*}
and consequently, there exists $\psi \in H^\infty(\D)$ such that $A=T_{\psi}$. At the same time, we know that there exists a nonzero $h_0\in \clk_\theta$ such that $X(h_0)=\nu h_0$. Therefore
$$
\nu h_0 = X(h_0) = A(h_0) = T_{\psi}(h_0)= \psi h_0,
$$
which means $\psi = \nu$, and hence
\[
XP_{\clk_\theta}=T_\nu P_{\clk_\theta} = T_{\psi} P_{\clk_\theta}.
\]
Combining with $(\ref{SNJ_Eq7})$, this implies
$$
X=XP_{\theta H^2(\T)} + X P_{\clk_\theta} = T_{\vp}P_{\theta H^2(\T)}+T_{\psi} P_{\clk_\theta}.
$$
This settles the characterization part of the theorem. For the uniqueness part, assume that
\[
X = S_{\vp, \psi}^\theta = S_{\vp_1, \psi_1}^\theta,
\]
$\vp, \psi, \vp_1, \psi_1\in H^\infty(\D)$. Then
\[
X \theta = S_{\vp, \psi}^\theta \theta = S_{\vp_1, \psi_1}^\theta \theta,
\]
implies that
\[
\vp \theta = \vp_1 \theta,
\]
that is, $\vp = \vp_1$. Similarly, for a nonzero $h_0\in \clk_\theta$, we have
\[
X h_0 = S_{\vp, \psi}^\theta h_0 = S_{\vp_1, \psi_1}^\theta h_0,
\]
which yields $\psi = \psi_1$. Our proof is therefore complete.
\end{proof}

In the following section, we connect paired operators with truncated Toeplitz operators \cite{DS}.

\section{General remarks}\label{sec remarks}

In this concluding section, we draw some general remarks about Toeplitz operators, Hankel operators, and paired operators. Let us first identify one entry point for the Toeplitz + Hankel operators through paired operators. 

\subsection{Toeplitz + Hankel and paired operators} Note that Hankel operators can also be represented as compressions of Laurent operators, that is,
\[
H_\vp = P_- M_\vp|_{H^2(\T)},
\]
for $\vp \in L^\infty(\T)$. From this perspective, with respect to $L^2(\T) = H^2(\T) \oplus H^2(\T)^\perp$, we can say that
\[
M_\vp = \begin{bmatrix}
T_\vp & *
\\
H_\vp & *
\end{bmatrix}.
\]
Let us recall that given $\vp, \psi \in L^\infty(\T)$, the transposed paired operator $\Sigma_{\vp, \psi}$ is defined by (see the end of Section \ref{sec paired on L})
\begin{equation}\label{eqn duals}
\Sigma_{\vp, \psi} = P_+ M_\vp + P_- M_\psi.
\end{equation}
Now restricting the transposed paired operator $\Sigma_{\vp, \psi}$ to $H^2(\T)$, we get
\[
\Sigma_{\vp, \psi}|_{H^2(\T)} = P_+ M_\vp {\big|_{H^2(\T)}} + P_- M_\psi {\big|_{H^2(\T)}},
\]
that is
\begin{equation}\label{eqn po = TH}
\Sigma_{\vp, \psi}|_{H^2(\T)} = T_{\vp} + H_\psi \qquad (\vp, \psi \in L^\infty(\T)).
\end{equation}
Therefore,
\[
\Sigma_{\vp, \psi}|_{H^2(\T)} = \text{Toeplitz} + \text{Hankel}.
\]
These facts suggest a close relationship between paired operators and Toeplitz + Hankel operators. It is important to note that in this scenario, we treat the range spaces of $T_{\vp}$ and $H_\psi$ as subspaces of $L^2(\T)$.

\subsection{Truncated Toeplitz operators} We have already pointed out that the Toeplitz and the Hankel operators have a close association with the paired operators (see the identity in \eqref{eqn po = TH}). In particular, given a paired operator $S_{\vp, \psi}: L^2(\T)\to L^2(\T)$, one has the Toeplitz operator
\[
T_\vp = P_{H^2(\T)} S_{\vp, \psi}|_{H^2(\T)}.
\]
Similar to this, $\theta$-paired operators are associated with another important class of operators, namely, truncated Toeplitz operators. These are the compressions of Toeplitz operators on model spaces. More specifically, given an inner function $\theta \in H^\infty(\D)$, the \textit{truncated Toeplitz operator} with symbol $\vp \in L^\infty(\T)$ is defined by
\[
A_\vp^\theta f = P_{\clk_\theta} (\vp f) \qquad (f \in \clk_\theta).
\]
Therefore, for a $\theta$-paired operator $S_{\vp, \psi}^\theta: H^2(\T) \to H^2(\T)$ with $\vp, \psi \in H^\infty(\D)$, we have
\[
A_\psi^\theta = P_{\clk_\theta} S_{\vp, \psi}^\theta|_{\clk_\theta}.
\]
Since its inception by Sarason in \cite{DS}, numerous researchers have used this framework to formulate theories of diverse nature. At times, results in this direction are comparable to the classical theory of Toeplitz operators. For more details and further development, we refer the reader to \cite{Bar-2010, Bar-2011, CFT} and the references therein. 

In closing this subsection, we mention that our primary reason for studying $\theta$-paired operators is to see how they match up with some other ideas about paired operators (say, paired operators on $L^2(\T)$). The classifications of model space-based $\theta$-paired operators on $H^2(\T)$ derived in this work vividly suggest that the general theory of paired operators will shift from case to case. We finally remark that the potential impact of $\theta$-paired operators on the theory of truncated Toeplitz operators, or vice versa, is the subject of future investigation.

\subsection{Concluding remarks}
Before concluding this paper, we would like to provide some overarching comments. Although the Toeplitz + Hankel operators are among the most natural operators, research on their properties has only recently gained some momentum. Nevertheless, progress in understanding these operators has been somewhat slow, perhaps due to the potential intricacy even at the individual level of the Toeplitz and the Hankel operators (however, see \cite{Bevil, Deift, Didenko}). It is even challenging to anticipate the right results or the extent to which they will be pleasant and accessible. From this point of view, what makes this work distinctive is that it gives a comprehensive and clear classification of vector-valued Toeplitz + Hankel operators. The result appears to have more promise for future study in the theory of classical operators because it is provided in terms of algebraic property. We also address the scalar case of Toeplitz + non-injective Hankel operators, which incorporates additional function-theoretic techniques. This brings more distinctive flavor to the results in terms of Hilbert function space theory.

\vspace{0.3in}

\noindent\textbf{Acknowledgement:}
The first named author is supported by a post-doctoral fellowship provided by the National Board for Higher Mathematics (NBHM), India (sanction order no: 0204/34/2023/R\&D-II/16487, dated 15 December, 2023). The research of the second named author is supported by the Theoretical Statistics and Mathematics Unit, Indian Statistical Institute, Bangalore, India, and also by the post-doctoral fellowship provided by the National Board for Higher Mathematics (NBHM), India (Order No: 0204/16(8)/2024/R\&D- II/6760, dated May 09, 2024). The research of the third named author is supported in part by TARE (TAR/2022/000063) by SERB, Department of Science \& Technology (DST), Government of India.

\end{document}